\newcommand{\chilow}[1]{\chi_{\lower2pt\hbox{$\scriptstyle#1$}}}
\DeclareMathOperator{\Fin}{Fin}
\DeclareMathOperator{\Ker}{Ker}
\DeclareMathOperator{\Ext}{Ext}
\DeclareMathOperator{\dens}{dens}
\DeclareMathOperator{\w}{w}
\DeclareMathOperator{\clop}{Clop}
\DeclareMathOperator{\MA}{MA}
\DeclareMathOperator{\CH}{CH}
\DeclareMathOperator{\ZFC}{ZFC}
\DeclareMathOperator{\dom}{dom}
\title[Nontrivial twisted sums for finite height spaces]{Nontrivial twisted sums for finite height spaces under Martin's Axiom}
\author{Claudia Correa}
\thanks{The author was partially supported by FAPESP grant 2018/09797-2.}
\address{Centro de Matem\'atica, Computa\c c\~ao e Cogni\c c\~ao,\hfill\break\indent Universidade Federal do ABC, Brazil}
\email{claudiac.mat@gmail.com, claudia.correa@ufabc.edu.br} \urladdr{http://professor.ufabc.edu.br/\~{}claudia.correa}
\subjclass[2010]{46E15, 54G12, 54D40}
\keywords{Banach spaces of continuous functions; twisted sums of Banach spaces; scattered spaces; reminders of compactifications of $\omega$}
\date{February 20th, 2019}
\begin{document}

\theoremstyle{plain}\newtheorem{teo}{Theorem}[section]
\theoremstyle{plain}\newtheorem{prop}[teo]{Proposition}
\theoremstyle{plain}\newtheorem{lem}[teo]{Lemma}
\theoremstyle{plain}\newtheorem{cor}[teo]{Corollary}
\theoremstyle{definition}\newtheorem{defin}[teo]{Definition}
\theoremstyle{remark}\newtheorem{rem}[teo]{Remark}

\begin{abstract}
We show that if we assume Martin's Axiom, then there exists a nontrivial twisted sum of $c_0$ and $C(K)$, for every compact space $K$ with finite height and weight at least {\it continuum}. This result settles the problem of existence of nontrivial twisted sums of $c_0$ and $C(K)$, for finite height spaces $K$, under Martin's Axiom.
\end{abstract}

\maketitle

\begin{section}{Introduction}

In this paper we study the existence of nontrivial twisted sums of $c_0$ and $C(K)$, for compact and scattered spaces $K$. As usual, $C(K)$ denotes the space of continuous real-valued functions defined on a compact Hausdorff space $K$, endowed with the supremum norm.
Recall that given Banach spaces $X$ and $Y$ a {\it twisted sum} of $Y$ and $X$ is an exact sequence in the category of Banach spaces of the form
$0 \rightarrow Y \rightarrow Z \rightarrow X \rightarrow 0$,
i.e., $Z$ is a Banach space and the arrows are bounded operators. This twisted sum is said to be {\it trivial} if the image of $Y \rightarrow Z$ is complemented in $Z$. It is clear that there exist nontrivial twisted sums in the category of Banach spaces, since there are uncomplemented subspaces.
An interesting problem is to determine if there are nontrivial twisted sums of $Y$ and $X$, for a fixed pair of Banach spaces $X$ and $Y$. In this context, the space $c_0$ plays a central role due to {\em Sobczyk's Theorem} \cite{Sobczyk} that states that $c_0$ is complemented in every separable superspace.
Therefore, if $X$ is a separable Banach space, then every twisted sum of $c_0$ and $X$ is trivial. A natural question here is about the converse of last implication, i.e., if $X$ is a Banach space such that every twisted sum of $c_0$ and $X$ is trivial, then $X$ is separable? This question is easily answered negatively, since there are nonseparable projective Banach spaces. However it becomes much more interesting in the context of $C(K)$ spaces. Recall that the Banach space $C(K)$ is separable if and only if the compact $K$ is metrizable. Therefore, Sobczyk's Theorem ensures that if $K$ is a metrizable compact space, then every twisted sum of $c_0$ and $C(K)$ is trivial. In this context the converse we are discussing can be rephrased as: Is there a nonmetrizable compact Hausdorff space $K$ such that every twisted sum of $c_0$ and $C(K)$ is trivial? This question was initially proposed in \cite{CastilloKalton} and it remains open. It has been recently studied in a series of papers \cite{Castilloscattered, Lep, ExtCKc0, ValdiviaTree, Plebanek}. In some of those papers, the class of compact spaces studied is the class of scattered spaces. Recall that a topological space $\mathcal X$ is called {\it scattered} if there exists an ordinal $\alpha$ such that its Cantor--Bendixson derivative $\mathcal X^{(\alpha)}$ is empty \cite[Chapter~6]{Koppelberg}. The least ordinal $\alpha$ such that $\mathcal X^{(\alpha)}=\emptyset$ is called the {\it height} of $\mathcal X$ and we say that $\mathcal X$ has {\it finite height} if its height is a natural number. Surprisingly, additional set-theoretic assumptions have played an essential role in those works \cite{Claudia}. The main axioms that appear are the {\it Continuum Hypothesis} ($\CH$) and {\it Martin's Axiom} ($\MA$). In \cite{Plebanek}, G. Plebanek and W. Marciszewski proved that, assuming $\MA+\neg \CH$, there exist nonmetrizable compact Hausdorff spaces $K$ such that every twisted sum of $c_0$ and $C(K)$ is trivial; answering consistently the question we are discussing. More precisely, they showed that if we assume $\MA$, then every twisted sum of $c_0$ and $C(K)$ is trivial, for every compact Hausdorff and separable space $K$ with height 3 and $\w(K)<\mathfrak c$, where $\w(K)$ denotes the weight of $K$ and $\mathfrak c$ denotes the cardinality of the {\it continuum} \cite[Theorem~9.7]{Plebanek}. In \cite[Corollary~4.2]{Lep}, this result was generalized for any finite height. On the other hand, J. Castillo showed that if we assume $\CH$, then there exists a nontrivial twisted sum of $c_0$ and $C(K)$, for every finite height and nonmetrizable compact space $K$ \cite[Theorem~1]{Castilloscattered}. Therefore, the existence of nontrivial twisted sums of $c_0$ and $C(K)$, for finite height, separable and small compact spaces $K$ is independent of the axioms of $\ZFC$. This paper is a continuation of \cite{Lep, Plebanek}. Our main result (Theorem \ref{main}) states that if we assume $\MA+\neg\CH$, then there exists a nontrivial twisted sum of $c_0$ and $C(K)$, for every finite height space $K$ with $\w(K) \ge \mathfrak c$. A fundamental tool in the present work is the study of reminders of compactifications of the discrete space $\omega$, where $\omega$ denotes the set of natural numbers. This study is presented in Section \ref{sec:reminders} and Section \ref{sec:twisted} is devoted to the construction of nontrivial twisted sums.

\end{section}

\begin{section}{Reminders of compactifications of $\omega$}
\label{sec:reminders}

Given a compactification $(\mathcal Y, \varphi)$ of a topological space $\mathcal X$, the {\it reminder} of this compactification is defined as $\mathcal Y \setminus \varphi[\mathcal X]$. Identifying $\varphi[\mathcal X]$ with $\mathcal X$, we denote the reminder by $\mathcal Y \setminus \mathcal X$. An interesting problem in the theory of compactifications and reminders is the problem of determining if a given compact Hausdorff space $K$ is homeomorphic to the reminder of a compactification of the discrete space $\omega$. The first deep result on this problem was obtained by I. Parovi\v{c}enko \cite{Parovicenko} and says that every compact Hausdorff space $K$ with $\w(K) \le \omega_1$ is homeomorphic to the reminder of a compactification of $\omega$, where $\omega_1$ denotes the first uncountable cardinal. It is well known that a compact space $K$ is homeomorphic to the reminder of a compactification of $\omega$ if and only if $K$ is a continuous image of $\beta \omega \setminus \omega$, where $\beta \omega$ denotes the Stone--\v{C}ech compactification of $\omega$. Therefore, if we restrict ourselves to the class of Boolean spaces, then {\it Stone's Duality} \cite[Chapter~3]{Koppelberg} implies that a Boolean space $K$ is homeomorphic to the reminder of a compactification of $\omega$ if and only if the algebra of clopen subsets of $K$ embeds in $\wp(\omega)/\Fin$. As usual $\wp(\omega)/\Fin$ denotes the Boolean algebra obtained by quotienting the Boolean algebra $\wp(\omega)$ by its ideal $\Fin$ formed by the finite subsets of $\omega$. Hence Parovi\v{c}enko's Theorem implies that every Boolean algebra with cardinality at most $\omega_1$ embeds in $\wp(\omega)/\Fin$. Therefore, under $\CH$, we have that every Boolean algebra with cardinality at most $\mathfrak c$ embeds in $\wp(\omega)/\Fin$. This result does not hold in $\ZFC$; for instance, it was shown in \cite{Frank} that it is relatively consistent with $\ZFC$ that there exists a Boolean algebra with cardinality $\mathfrak c$ that does not embed in $\wp(\omega)/\Fin$. Now, let us take a look at this problem under $\MA+\neg \CH$. In \cite{Dowen}, it was shown that under $\MA$ every Boolean algebra with cardinality smaller than $\mathfrak c$ embeds in $\wp(\omega)/\Fin$. However, the same does not hold for Boolean algebras with cardinality $\mathfrak c$.
The Boolean algebra with cardinality $\mathfrak c$ that does not embed in $\wp(\omega)/\Fin$ given in \cite{Frank} is also relatively consistent with $\MA+\neg \CH$. To the best knowledge of the author, it is not known if it is relatively consistent with $\MA+\neg \CH$ that every Boolean algebra with cardinality $\mathfrak c$ embeds in $\wp(\omega)/\Fin$. In this section, we present some contributions to the study of the problem of embedding Boolean algebras with cardinality $\mathfrak c$ in $\wp(\omega)/\Fin$, under $\MA+\neg\CH$.

Let us start by fixing some notations. We denote by $\vert X \vert$ the cardinality of a set $X$. Given $A,B \in \wp(\omega)$ we say that $A$ and $B$ are {\it almost equal} if the symmetric difference $A \Delta B$ is finite and we denote this by $A=^*B$; we say that $A$ and $B$ are {\it almost disjoint} if $A \cap B=^*\emptyset$; we say that $A$ is {\it almost contained} in $B$ if $A \setminus B$ is finite and denote this by $A \subset^*B$. Given a collection $\mathcal A$ of subsets of $\omega$, we denote by $\mathcal I(\mathcal A)$ the ideal of $\wp(\omega)$ formed by the sets that are almost contained in a finite union of elements of $\mathcal A$.

\begin{lem}\label{martin}
Assume $\MA+\neg\CH$. Let $\mathcal A, \mathcal B, \mathcal C \subset \wp(\omega)$ with $\vert \mathcal A\vert \le \omega$, $\vert \mathcal B \vert < \mathfrak c$ and $\vert \mathcal C \vert < \mathfrak c$. Assume that:
\begin{enumerate}
\item[(i)] $A \cap B=^*\emptyset$, for every $A \in \mathcal A$ and every $B \in \mathcal B$;
\item[(ii)] $\mathcal C \cap \mathcal I(\mathcal A)=\emptyset$;
\item[(iii)] $\mathcal C \cap \mathcal I(\mathcal B)=\emptyset$.
\end{enumerate}
Then there exists $S \in \wp(\omega)$ such that $A \subset^* S$, for every $A \in \mathcal A$, $B \cap S=^*\emptyset$, for every $B \in \mathcal B$, and $C \cap S$ and $C \cap (\omega \setminus S)$ are infinite, for every $C \in \mathcal C$.
\end{lem}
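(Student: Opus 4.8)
The plan is to construct the desired set $S$ by a transfinite recursion of length $\mathfrak c$, building $S$ as a union of an increasing chain of approximations, and to use Martin's Axiom in the form of a forcing/partial-order argument at a single step to handle the countably many almost-containment requirements from $\mathcal A$ together with the $<\mathfrak c$ splitting requirements from $\mathcal C$. Before setting up the recursion, let me isolate what the three requirements demand. First, $S$ must almost contain every $A \in \mathcal A$; since $|\mathcal A| \le \omega$, and by (i) each such $A$ is almost disjoint from every $B \in \mathcal B$, we may as well replace $\mathcal A$ by the single set $A_0 = \bigcup \mathcal A$ (after a standard disjointification it is almost a union) — the essential content is that $S$ must almost contain a fixed countable family, equivalently one pseudo-union, while staying almost disjoint from all of $\mathcal B$. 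Second, $S$ must be almost disjoint from every $B \in \mathcal B$. Third, $S$ must split every $C \in \mathcal C$, meaning both $C \cap S$ and $C \setminus S$ are infinite.

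First I would verify that the hypotheses make the splitting requirement genuinely satisfiable for each individual $C$. The point of conditions (ii) and (iii) is exactly this: $\mathcal C \cap \mathcal I(\mathcal A) = \emptyset$ guarantees that no $C$ is almost covered by a finite union of members of $\mathcal A$, so $C \setminus (\text{anything we are forced to put into } S \text{ because of } \mathcal A)$ is infinite; symmetrically $\mathcal C \cap \mathcal I(\mathcal B) = \emptyset$ guarantees that no $C$ is almost covered by finitely many members of $\mathcal B$, so $C$ retains infinitely many points that are \emph{not} forbidden from $S$ by the almost-disjointness-from-$\mathcal B$ requirement. Thus for each fixed $C$ there is no local obstruction, and the task is to find a single $S$ simultaneously meeting the continuum-many constraints.

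The heart of the argument is the following Martin's Axiom application. I would define a partial order $\mathbb P$ whose conditions are pairs $p = (s_p, F_p)$ where $s_p$ is a finite approximation describing $\omega$ partitioned into "in $S$" and "out of $S$" decisions on a finite set, and $F_p$ is a finite \emph{side condition} — a finite subset of $\mathcal A \cup \mathcal B$ promising that future decisions will respect the almost-containment and almost-disjointness constraints for the sets named in $F_p$ (concretely, once $A \in F_p$ we promise to put all but finitely many further elements of $A$ into $S$, and once $B \in F_p$ we promise to keep all but finitely many further elements of $B$ out of $S$). Order $\mathbb P$ by $q \le p$ when $s_q$ extends $s_p$, $F_q \supseteq F_p$, and the new decisions in $s_q$ honor the promises in $F_p$. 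Compatibility of conditions reduces to a finite combinatorial check because two conditions with the same (or compatible) side-condition promises can be amalgamated on a large enough finite domain; since the side conditions come from the constraints that are almost-disjoint or almost-contained with controlled finite overlap, one shows $\mathbb P$ is $\sigma$-centered (hence ccc), which is what Martin's Axiom needs. I would then define the dense sets: for each $A \in \mathcal A$, the set $D_A$ of conditions with $A \in F_p$; for each $B \in \mathcal B$, the set $E_B$ of conditions with $B \in F_p$; and for each $C \in \mathcal C$ and each $n$, the sets $G^{\mathrm{in}}_{C,n}$ and $G^{\mathrm{out}}_{C,n}$ of conditions that have placed at least $n$ elements of $C$ into $S$ and kept at least $n$ elements of $C$ out of $S$, respectively. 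Density of $D_A$ and $E_B$ is immediate; density of the splitting sets $G^{\mathrm{in}}_{C,n}, G^{\mathrm{out}}_{C,n}$ is precisely where (ii) and (iii) are used, since they guarantee that infinitely many elements of $C$ remain free to be pushed in or out despite the finitely many active promises in any given condition. Because $|\mathcal A| \le \omega$, $|\mathcal B| < \mathfrak c$ and $|\mathcal C| < \mathfrak c$, the total number of dense sets is $<\mathfrak c$, so Martin's Axiom yields a filter $\mathcal G$ meeting all of them, and $S = \bigcup \{\, \{n : s_p(n) = \text{in}\} : p \in \mathcal G \,\}$ is the required set.

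The main obstacle will be the density verification for the splitting sets $G^{\mathrm{in}}_{C,n}$ and $G^{\mathrm{out}}_{C,n}$, together with checking that $\mathbb P$ is ccc (or $\sigma$-centered). For density, given a condition $p$ and a target $C$, I must find an element of $C$ outside the finite domain of $s_p$ that I am permitted to put into $S$ without violating the finitely many $\mathcal B$-promises in $F_p$ — this is where $\mathcal C \cap \mathcal I(\mathcal B) = \emptyset$ is essential, since it says $C$ is not almost covered by $\bigcup (F_p \cap \mathcal B)$, leaving infinitely many candidates — and symmetrically an element of $C$ I am permitted to keep out, using $\mathcal C \cap \mathcal I(\mathcal A) = \emptyset$. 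The ccc/$\sigma$-centeredness verification requires care because the side conditions $F_p$ can be arbitrary finite subsets of an uncountable family, so I would establish $\sigma$-centeredness by centering conditions according to their finite stem $s_p$: any two conditions with the same stem $s_p$ are compatible, since their promises can be jointly honored on the complement of the common finite domain (each promise only constrains cofinitely much of one set, and finitely many such constraints are simultaneously satisfiable on a cofinite set), giving countably many centered pieces indexed by the finite stems. Once $\sigma$-centeredness is in hand, Martin's Axiom applies and the construction closes.
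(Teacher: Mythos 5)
There is a genuine gap in your construction: meeting your dense sets does not yield $A \subset^* S$ for $A \in \mathcal A$. First, with finite stems the promise ``put all but finitely many further elements of $A$ into $S$'' is not a well-defined ordering requirement --- each single extension adds only finitely many decisions, so the ``all but finitely many'' clause is vacuous unless you record the exceptional sets; the only sensible reading is the strict one, in which every element of $A$ decided after $A$ enters $F_p$ must be decided \emph{in}. But under that reading, adjoining $A$ to the side condition never actually places points of $A$ into $S$; it only forbids excluding them, and since you define $S$ as the union of the decided-in parts of the stems, every point of $A$ that is never decided ends up \emph{outside} $S$. Nothing in your dense sets forces the points of $A$ to be decided, and the natural candidates $\{p : m \in \dom(s_p)\}$ are not dense: once $m \in A \cap B$ with both $A \in F_p \cap \mathcal A$ and $B \in F_p \cap \mathcal B$, the two promises conflict and $m$ can never be decided below $p$. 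By (i) each trace $A \cap B$ is finite, but your dense sets $E_B$ force \emph{every} $B \in \mathcal B$ into the side condition eventually, so $A \setminus S$ is only controlled up to the union of infinitely many such finite traces and may well be infinite. A repair exists --- e.g., extra dense sets requiring that at the moment $B$ is adjoined, the finitely many not-yet-decided, not-yet-conflicted points of $A \cap B$ for each $A$ already in the side condition be decided in --- but that is a genuinely more delicate argument than the one you wrote, and it is the crux of the lemma, not a routine verification. (Your $\sigma$-centeredness claim and your density argument for the splitting sets $G^{\mathrm{in}}_{C,n}$, $G^{\mathrm{out}}_{C,n}$ via (ii) and (iii) are correct.) A separate, though not load-bearing, error: the aside that one ``may as well replace $\mathcal A$ by $A_0 = \bigcup \mathcal A$'' is false --- hypothesis (i) gives $A \cap B$ finite for each pair, but a single $B$ can meet infinitely many members of $\mathcal A$, so $B \cap A_0$ may be infinite; likewise some $C$ can satisfy $C \subset^* A_0$ while $C \notin \mathcal I(\mathcal A)$, so (ii) does not transfer to $\{A_0\}$.

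The paper avoids this entire difficulty by choosing conditions that are (possibly infinite) partial functions $f : \dom(f) \subseteq \omega \to 2$ with $f^{-1}(1) =^* \bigcup_{i=1}^{n} A_i$ and $f^{-1}(0) =^* \bigcup_{j=1}^{m} B_j$, ordered by extension; the ``promises'' are thus built into the conditions themselves. There, meeting $D_A = \{f : A \subset^* f^{-1}(1)\}$ immediately gives $A \subset^* S$, since $f^{-1}(1) \subseteq S$ and $f^{-1}(0) \cap S = \emptyset$ for every $f$ in the filter; and the ccc follows because countability of $\mathcal A$ leaves only countably many possible sets $f^{-1}(1)$, any two conditions with the same ones-set being compatible. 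In short, the infinite-condition poset makes the $\mathcal A$-requirements hold by fiat at the moment the dense set is met, which is exactly the step your finite-condition poset with side conditions fails to deliver as written.
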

\begin{proof}
Let $\mathbb P$ denote the set of functions $f:\dom(f)\subset \omega \longrightarrow 2$ satisfying $f^{-1}(1)=^*\bigcup_{i=1}^n A_i$ and $f^{-1}(0)=^*\bigcup_{j=1}^m B_j$, where $n, m \in \omega$, $A_i \in \mathcal A$ and $B_j \in \mathcal B$. Endow $\mathbb P$ with the partial order defined as $f \leq g$ iff $g \subset f$. Note that if $f,g \in \mathbb P$ satisfy $f^{-1}(1)=g^{-1}(1)$, then they are compatible, therefore the fact that $\mathcal A$ is countable implies that $(\mathbb P, \leq)$ has ccc. Consider the following dense subsets of $\mathbb P$:
\[D_A=\{f \in \mathbb P: A \subset^* f^{-1}(1)\}, \ D_B=\{f \in \mathbb P: B \subset^* f^{-1}(0)\} \ \text{and}\]
\[D_{C,n}=\{f \in \mathbb P: \exists i,j>n \ \text{with} \ i \in C \cap f^{-1}(1) \ \text{and} \ j \in C \cap f^{-1}(0)\},\]
for every $A \in \mathcal A$, $B \in \mathcal B$, $C \in \mathcal C$ and $n \in \omega$. Let $G$ be a filter in $\mathbb P$ that intersects all those dense sets whose existence is ensured by $\MA$ and define the function $\varphi=\bigcup G$. The desired set is given by $S=\varphi^{-1}(1)$.
\end{proof}

Let $\mathfrak B$ be a Boolean algebra. We denote by $\langle X \rangle$ the subalgebra of $\mathfrak B$ generated by its subset $X$ and by $\mathcal I(b)$ the {\it ideal generated} by $b \in \mathfrak B$, i.e., $\mathcal I(b)=\{c \in \mathfrak B: c \le b\}$.

\begin{teo}\label{resultadoBA}
Assume $\MA+\neg \CH$. Let $\mathfrak B$ be a Boolean algebra with $\vert \mathfrak B \vert=\mathfrak c$ and let $X$ be a set of generators of $\mathfrak B$.
If $\vert \{b \in X: \vert \mathcal I(b)\vert> \omega\} \vert <\mathfrak c$, then $\mathfrak B$ embeds in $\wp(\omega)/\Fin$.
\end{teo}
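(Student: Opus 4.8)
The plan is to build the embedding by transfinite recursion, adjoining one generator at a time and using Lemma~\ref{martin} as the one-step extension tool, after first disposing of the generators with large principal ideals through the known embedding theorem for algebras of cardinality smaller than $\mathfrak c$.

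First I would split the generating set as $X=X_0\cup X_1$, where $X_0=\{b\in X:\vert\mathcal I(b)\vert>\omega\}$ and $X_1=X\setminus X_0$. By hypothesis $\vert X_0\vert<\mathfrak c$, so the subalgebra $\mathfrak B_0=\langle X_0\rangle$ satisfies $\vert\mathfrak B_0\vert\le\max(\vert X_0\vert,\omega)<\mathfrak c$ and hence embeds in $\wp(\omega)/\Fin$ by the result of \cite{Dowen}. Since $\vert\mathfrak B\vert=\mathfrak c$ forces $\vert X_1\vert=\mathfrak c$, I would enumerate $X_1=\{x_\alpha:\alpha<\mathfrak c\}$ and set $\mathfrak B_\alpha=\langle X_0\cup\{x_\beta:\beta<\alpha\}\rangle$, so that $\vert\mathfrak B_\alpha\vert<\mathfrak c$ for every $\alpha<\mathfrak c$. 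The recursion maintains an injective homomorphism $h_\alpha:\mathfrak B_\alpha\to\wp(\omega)/\Fin$, taking unions at limit stages; the whole content lies in the successor step.

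At a successor stage, writing $\mathfrak C=\mathfrak B_\alpha$, $h=h_\alpha$ and $b=x_\alpha$, I would fix for each $c\in\mathfrak C$ a representative $\widehat c\subseteq\omega$ of $h(c)$ and apply Lemma~\ref{martin} to $\mathcal A=\{\widehat c:c\in\mathfrak C,\ c\le b\}$, $\mathcal B=\{\widehat c:c\in\mathfrak C,\ c\le\neg b\}$ and $\mathcal C=\{\widehat c:c\in\mathfrak C,\ c\wedge b\ne 0\ \text{and}\ c\wedge\neg b\ne 0\}$. Here $\mathcal A$ is countable because $\{c\in\mathfrak C:c\le b\}\subseteq\mathcal I(b)$ and $x_\alpha\in X_1$, while $\vert\mathcal B\vert,\vert\mathcal C\vert\le\vert\mathfrak C\vert<\mathfrak c$; this is exactly where the two cardinality hypotheses enter. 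The output set $S$ then defines $h_{\alpha+1}(b)=[S]$: the conditions $A\subset^* S$ and $B\cap S=^*\emptyset$ give $h(c)\le[S]$ whenever $c\le b$ and $h(c)\wedge[S]=0$ whenever $c\le\neg b$, which is precisely the criterion for extending $h$ to a homomorphism on $\mathfrak B_{\alpha+1}=\langle\mathfrak C,b\rangle$ (every element of which has the form $(c_1\wedge b)\vee(c_2\wedge\neg b)$), while the splitting of every $\widehat c\in\mathcal C$ keeps the extension injective, since a nonzero $(c_1\wedge b)\vee(c_2\wedge\neg b)$ maps to a set containing an infinite piece $\widehat{c_1}\cap S$ or $\widehat{c_2}\setminus S$.

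The step that requires care is the verification of the three hypotheses of Lemma~\ref{martin}, each of which reduces to the injectivity of $h$ on $\mathfrak C$. Hypothesis (i) holds because $c_1\le b$ and $c_2\le\neg b$ give $c_1\wedge c_2=0$, hence $\widehat{c_1}\cap\widehat{c_2}=^*\emptyset$. For (ii), if some $\widehat c\in\mathcal C$ were almost contained in $\widehat{c_1}\cup\dots\cup\widehat{c_n}$ with each $c_i\le b$, then setting $d=c_1\vee\dots\vee c_n\le b$ I would get $\widehat c\subset^*\widehat d$, so $h(c\wedge\neg d)=h(c)\wedge\neg h(d)=0$ and thus $c\le d\le b$ by injectivity, contradicting $c\wedge\neg b\ne 0$; hypothesis (iii) is symmetric. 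Granting these, the recursion runs through all $\alpha<\mathfrak c$ and $h=\bigcup_{\alpha}h_\alpha$ is the desired embedding of $\mathfrak B=\mathfrak B_{\mathfrak c}$ into $\wp(\omega)/\Fin$. I expect the main obstacle to be essentially bookkeeping: ensuring that the principal-ideal bound genuinely keeps $\mathcal A$ countable at every stage and that $\vert\mathfrak B_\alpha\vert$ stays below $\mathfrak c$ throughout the recursion, so that Lemma~\ref{martin} remains applicable at each successor.
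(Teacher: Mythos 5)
Your proposal is correct and follows essentially the same route as the paper: the same split of the generators into those with uncountable principal ideal (handled by van Douwen's embedding result under $\MA$) and a transfinite recursion over the rest, with Lemma~\ref{martin} applied at each successor step to the same three families $\mathcal A$, $\mathcal B$, $\mathcal C$. The only differences are cosmetic --- you work with chosen representatives instead of the full $q$-preimages and verify the one-generator extension criterion by hand where the paper cites Sikorski's extension theorem.
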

\begin{proof}
If $\mathfrak{B}_0=\big \langle \{b \in X: \vert \mathcal I(b)\vert>\omega\} \big \rangle$, then $\MA$ implies that there exists an embedding $\varphi:\mathfrak{B}_0 \rightarrow \wp(\omega)/\Fin$ \cite{Dowen}. Enumerate $\{b \in X: \vert \mathcal I(b)\vert \le \omega\}$ as $\{b_\alpha: \alpha \in \mathfrak c\}$. By recursion on $\alpha \in \mathfrak c$, we shall construct an increasing family $(\varphi_\alpha)_{\alpha \in \mathfrak c}$, where each $\varphi_\alpha$ is an embedding from $\big \langle \mathfrak{B}_0 \cup \{b_\beta: \beta \le \alpha\}\big \rangle$ in $\wp(\omega)/\Fin$ extending $\varphi$. Then the embedding of $\mathfrak B$ in $\wp(\omega)/\Fin$ will be given by $\bigcup_{\alpha \in \mathfrak c} \varphi_\alpha$. For every $\alpha \in \mathfrak c$, set $\mathfrak B_{\alpha}=\big \langle \mathfrak{B}_0 \cup \{b_\beta: \beta < \alpha\}\big \rangle$. Assume that we have already defined $\varphi_\beta$, for every $\beta \in \alpha$, then $\psi=\bigcup_{\beta \in \alpha}\varphi_\beta$ is an embedding from $\mathfrak{B}_\alpha$ in $\wp(\omega)/\Fin$ that extends $\varphi$. Recall that {\em Sikorski's extension Theorem} \cite{Koppelberg} ensures that there is an embedding of $\mathfrak{B}_{\alpha+1}$ into $\wp(\omega)/\Fin$ extending $\psi$ if and only if there is $S \in\wp(\omega)$ satisfying:
\begin{enumerate}
\item[(a)] $\psi(b) \le q(S)$ if and only if $b \le b_\alpha$, for every $b \in \mathfrak{B}_\alpha$;
\item[(b)] $\psi(b) \wedge q(S)=0$ if and only if $b \wedge b_\alpha=0$, for every $b \in \mathfrak{B}_\alpha$,
\end{enumerate}
where $q:\wp(\omega)\rightarrow\wp(\omega)/\Fin$ denotes the quotient map. Set:
\[\mathcal A=q^{-1}\big[\{ \psi(b): b \in \mathfrak{B}_\alpha \ \text{and} \ b \le b_\alpha\}\big],\]
\[\mathcal B=q^{-1}\big[\{ \psi(b): b \in \mathfrak{B}_\alpha \ \text{and} \ b \wedge b_\alpha=0\}\big] \ \text{and}\]
\[\mathcal C=q^{-1}\big[\{ \psi(b): b \in \mathfrak{B}_\alpha, b \wedge b_\alpha \ne 0 \ \text{and} \ b \wedge b'_\alpha \ne 0\}\big].\]
It is clear that $\vert \mathcal B \vert<\mathfrak c$, $\vert \mathcal C \vert<\mathfrak c$ and $\vert \mathcal A \vert \le \omega$. Conditions (i), (ii) and (iii) of Lemma \ref{martin} follow from the fact that $\psi$ is an embedding. Let $S$ be the set given by Lemma \ref{martin} and note that $S$ satisfies conditions (a) and (b) above.
\end{proof}

In order to apply Theorem \ref{resultadoBA} to Boolean spaces, we present a simple characterization of sets of generators of the clopen algebra $\clop(K)$, for any Boolean space $K$. Recall that a collection $\mathfrak F$ of functions defined on a set $X$ is said to {\it separate the points of $X$} if given $x_1,x_2 \in X$ with $x_1 \ne x_2$, there exists $f \in \mathfrak F$ such that $f(x_1) \ne f(x_2)$.

\begin{lem}\label{separapontos}
Let $K$ be a Boolean space and $\{C_i: i \in I\} \subset \clop(K)$. Then $\{C_i: i \in I\}$ generates $\clop(K)$ if and only if $\{\chilow{C_i}: i \in I\}$ separates the points of $K$, where $\chilow{C_i}$ denotes the characteristic function of $C_i$.
\end{lem}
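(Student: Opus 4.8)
The plan is to prove both implications of this biconditional characterization of generating sets for $\clop(K)$. The key insight is that in a Boolean space, the clopen algebra is intimately tied to the topology, and generation of the algebra corresponds to separation of points. I would first establish the easier direction and then tackle the converse, which is where the topological structure of Boolean spaces must be exploited.

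For the forward direction, suppose $\{C_i : i \in I\}$ generates $\clop(K)$. I would argue by contraposition: if $\{\chilow{C_i} : i \in I\}$ fails to separate two distinct points $x_1, x_2 \in K$, then $x_1 \in C_i \iff x_2 \in C_i$ for every $i \in I$. Since $K$ is Hausdorff and Boolean (zero-dimensional compact), there is a clopen set $C \in \clop(K)$ with $x_1 \in C$ but $x_2 \notin C$. The point is that every element of $\langle \{C_i : i \in I\} \rangle$ is a finite Boolean combination (finite unions of finite intersections of the $C_i$ and their complements), and any such combination cannot separate $x_1$ from $x_2$ precisely because each generator fails to. Thus $C$ cannot lie in the generated algebra, contradicting the assumption that $\{C_i : i \in I\}$ generates $\clop(K)$. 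I would formalize the claim that membership of a point in a Boolean combination depends only on which generators contain that point, which is immediate from the definition of the Boolean operations on characteristic functions.

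For the converse, suppose $\{\chilow{C_i} : i \in I\}$ separates points, and let $\mathfrak{A} = \langle \{C_i : i \in I\} \rangle$ be the subalgebra they generate; I want to show $\mathfrak{A} = \clop(K)$. The natural approach is via Stone duality: $\mathfrak{A}$ is the clopen algebra of its own Stone space, and the inclusion $\mathfrak{A} \hookrightarrow \clop(K)$ dually induces a continuous surjection $K \to \operatorname{St}(\mathfrak{A})$. Separation of points by the generators ensures this dual map is injective, hence a homeomorphism (a continuous bijection between compact Hausdorff spaces), which forces $\mathfrak{A} = \clop(K)$. Alternatively, and perhaps more self-containedly, I would argue directly: take any $C \in \clop(K)$ and show $C \in \mathfrak{A}$. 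For each pair of points $x \in C$ and $y \in K \setminus C$, separation gives a generator $C_i$ (or its complement) containing one but not the other; a standard compactness argument then produces a finite Boolean combination of generators equal to $C$.

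The main obstacle lies in the converse direction, specifically in making the compactness argument rigorous. The delicate point is that separation of points is an apparently weak, pointwise hypothesis, whereas generation is an algebraic statement about finite combinations. The bridge is compactness of $K$: I would fix $x \in C$ and, for each $y \in K \setminus C$, choose a clopen generator-combination separating $x$ from $y$; intersecting finitely many of these (using that $K \setminus C$ is compact) yields a clopen neighborhood of $x$ inside $C$ that belongs to $\mathfrak{A}$, and then covering $C$ by such neighborhoods and using compactness of $C$ yields $C$ as a finite union of elements of $\mathfrak{A}$. I expect the Stone-duality route to be cleaner to write, so I would likely present that as the primary argument, invoking Stone's Duality as already cited in the excerpt.
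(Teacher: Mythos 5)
Your proposal is correct, and your primary (Stone-duality) route is essentially the paper's own proof: the paper dualizes the evaluation map $\varphi\colon K\to 2^I$ built from the $\chilow{C_i}$, noting that the image of $\varphi^*$ is the generated subalgebra and that injectivity of $\varphi$ is equivalent to separation of points, which packages your two directions (the dual surjection $K\to\operatorname{St}(\mathfrak A)$ being injective, hence a homeomorphism by compactness, and the contrapositive argument when separation fails) into a single map. Your self-contained covering argument via compactness is also sound, but as an alternative it is not needed; the duality version you favor matches the paper's argument.
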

\begin{proof}
Let $\varphi:K \rightarrow 2^I$ be the continuous map whose $i$-th coordinate is $\chilow{C_i}$ and denote by $\varphi^*:\clop(2^I) \to \clop(K)$ the corresponding map under Stone's Duality. The conclusion is obtained by noting that the image of $\varphi^*$ is the subalgebra generated by $\{C_i: i \in I\}$
and that $\varphi$ is injective if and only if $\{\chilow{C_i}: i \in I\}$ separates the points of $K$.
\end{proof}

Given a topological space $\mathcal X$ and $x \in \mathcal X$, we define:
\[\w(\mathcal X,x)=\min \{\w(V): \ \text{$V$ is a nhood of $x$}\}.\]

\begin{prop}\label{zeroDim}
Assume $\MA+\neg \CH$. Let $K$ be a Boolean space such that $\w(K)=\mathfrak c$. If $\w\big(\{p \in K: \w(K,p)>\omega\}\big) < \mathfrak c$, then $K$ is homeomorphic to the reminder of a compactification of $\omega$.
\end{prop}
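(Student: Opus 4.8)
The plan is to reduce the statement to Theorem \ref{resultadoBA} by exhibiting a suitable set of generators of $\clop(K)$. Since $K$ is an infinite Boolean space we have $\vert\clop(K)\vert = \w(K) = \mathfrak c$, and by the remarks opening this section it suffices to embed $\clop(K)$ into $\wp(\omega)/\Fin$. Thus I would set $\mathfrak B = \clop(K)$ and look for a generating set $X$ with $\vert\{C \in X : \vert\mathcal I(C)\vert > \omega\}\vert < \mathfrak c$; then Theorem \ref{resultadoBA} finishes the proof. Note that $\mathcal I(C) = \clop(C)$ for a clopen set $C$, so $\vert\mathcal I(C)\vert > \omega$ exactly when $C$ is non-metrizable.

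Write $D = \{p \in K : \w(K,p) > \omega\}$. First I would check that $D$ is closed: if $\w(K,p) = \omega$ then $p$ has a clopen neighborhood $V$ of countable weight, and every $q \in V$ inherits $V$, so $\w(K,q) = \omega$; hence $K \setminus D$ is open. In particular $D$ is a compact Boolean space with $\w(D) < \mathfrak c$ by hypothesis. The key observation is that any clopen $C$ with $C \cap D = \emptyset$ is metrizable: every $p \in C$ has $\w(K,p) = \omega$, so $p$ has a clopen second-countable neighborhood $V_p \subseteq C$; by compactness finitely many $V_{p_1}, \dots, V_{p_n}$ cover $C$, and a finite union of (open) second-countable subspaces is second-countable, whence $\w(C) = \omega$ and $\vert\mathcal I(C)\vert \le \omega$.

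With this in hand I would build the generators in two families. Because the restriction map $C \mapsto C \cap D$ from $\clop(K)$ onto $\clop(D)$ is surjective, I can choose a family $\mathcal G \subseteq \clop(K)$ with $\vert\mathcal G\vert = \w(D) < \mathfrak c$ such that $\{G \cap D : G \in \mathcal G\}$ generates $\clop(D)$, i.e.\ separates the points of $D$ by Lemma \ref{separapontos}. Next, for each $q \in K \setminus D$ I pick a clopen neighborhood $W_q$ of $q$ with $W_q \cap D = \emptyset$ (possible since $K$ is zero-dimensional and $D$ is closed), which is automatically metrizable by the previous paragraph; let $\mathcal H = \bigcup_{q \in K \setminus D} \clop(W_q)$. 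Then $X = \mathcal G \cup \mathcal H$ is my candidate generating set.

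Finally I would verify via Lemma \ref{separapontos} that $\{\chilow{C} : C \in X\}$ separates the points of $K$: two points of $D$ are separated by $\mathcal G$; a point of $D$ and a point $q \notin D$ are separated by $W_q$, which contains $q$ but misses $D$; and two points $p \ne q$ of $K \setminus D$ either lie in a common metrizable $W_p$, inside which some clopen set separates them, or are already separated by $W_p$ itself. Since every member of $\mathcal H$ is metrizable, $\{C \in X : \vert\mathcal I(C)\vert > \omega\} \subseteq \mathcal G$, which has cardinality $< \mathfrak c$; Theorem \ref{resultadoBA} then embeds $\clop(K)$ into $\wp(\omega)/\Fin$, as desired. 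The main obstacle is the metrizability of clopen sets disjoint from $D$ — the passage from countable local weight to countable global weight via compactness — together with ensuring that the two families of generators jointly separate points across the boundary between $D$ and $K \setminus D$.
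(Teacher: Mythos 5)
Your proposal is correct and follows essentially the same route as the paper: take $F=\{p\in K:\w(K,p)>\omega\}$, lift (generators of) $\clop(F)$ to fewer than $\mathfrak c$ clopens of $K$, add clopen sets disjoint from $F$ (all of which generate countable ideals), and apply Lemma \ref{separapontos} together with Theorem \ref{resultadoBA}. Your only deviations are inessential economies — using a generating family of $\clop(F)$ instead of all of it, and restricting to clopens of chosen neighborhoods $W_q$ — plus spelling out the compactness argument for metrizability that the paper leaves as ``clear.''
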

\begin{proof}
Define $F=\{p \in K: \w(K,p)>\omega\}$ and note that $F$ is a closed subset of $K$. For each $C \in \clop(F)$, choose an element $\tilde C$ in $\clop(K)$ such that $C= \tilde C \cap F$ and set $X=\{\tilde C: C \in \clop(F)\} \cup \{C \in \clop(K): C \subset K \setminus F\}$.
It is clear that $\mathcal I(C)=\clop(C)$ is countable, for every $C \in \clop(K)$ with $C \subset K \setminus F$. The result follows from Theorem \ref{resultadoBA} and Lemma \ref{separapontos}, since $\vert \clop(F) \vert=\w(F)<\mathfrak c$ and $\{\chilow{C}: C \in X\}$ separates the points of $K$.
\end{proof}

\begin{rem}\label{fatosDispersos}
Recall that if a Boolean space is scattered, then its weight coincides with its cardinality and any of its closed subspaces is again a scattered Boolean space \cite{Koppelberg}.
\end{rem}

\begin{cor}\label{dispersoReminder}
Assume $\MA+\neg\CH$. Let $K$ be a compact Hausdorff and scattered space with $\w(K)=\mathfrak c$. If $\vert \{p \in K: \w(K,p)> \omega\} \vert < \mathfrak c$, then $K$ is homeomorphic to the reminder of a compactification of $\omega$.
\end{cor}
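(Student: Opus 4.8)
The plan is to reduce the statement to Proposition \ref{zeroDim}, whose hypotheses are phrased in terms of $\w(F)$ for $F=\{p \in K: \w(K,p)>\omega\}$ and which applies only to Boolean spaces. So there are two gaps to bridge: first, to recognize that the scattered compact space $K$ of the corollary is automatically a Boolean space; and second, to convert the cardinality hypothesis $\vert F\vert<\mathfrak c$ into the weight hypothesis $\w(F)<\mathfrak c$ demanded by the proposition.

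For the first gap, I would recall the classical fact that every compact Hausdorff scattered space is a Boolean space. In a Hausdorff space singletons are closed, so an isolated point of a connected subspace with more than one point would be clopen in it, forcing that subspace to be a single point; hence a scattered Hausdorff space has only trivial connected subspaces, i.e.\ it is totally disconnected. A totally disconnected compact Hausdorff space is zero-dimensional, that is, a Boolean space. Thus $K$ is a Boolean space with $\w(K)=\mathfrak c$, and $\clop(K)$ is available.

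For the second gap, I would first check that $F$ is closed: its complement is the set of points admitting a neighborhood of countable weight, and any such neighborhood witnesses the same property for all of its interior points, so the complement is open. Since $K$ is a scattered Boolean space and $F$ is closed in $K$, Remark \ref{fatosDispersos} tells me that $F$ is again a scattered Boolean space and that its weight coincides with its cardinality. Therefore the corollary's hypothesis $\vert F\vert<\mathfrak c$ yields $\w(F)=\vert F\vert<\mathfrak c$.

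With both gaps closed, the hypotheses of Proposition \ref{zeroDim} are met: $K$ is a Boolean space with $\w(K)=\mathfrak c$ and $\w(F)<\mathfrak c$. The conclusion that $K$ is homeomorphic to the reminder of a compactification of $\omega$ then follows at once. I expect no real obstacle beyond the standard topological fact that compact scattered spaces are zero-dimensional; the rest is just matching the data to the hypotheses of the two earlier results.
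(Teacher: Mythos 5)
Your proof is correct and takes essentially the same route as the paper, whose entire proof of this corollary is the one-line deduction from Remark \ref{fatosDispersos} and Proposition \ref{zeroDim}. You have merely made explicit the details the paper leaves implicit: that a scattered compact Hausdorff space is Boolean, that $F=\{p\in K:\w(K,p)>\omega\}$ is closed, and that Remark \ref{fatosDispersos} converts the hypothesis $\vert F\vert<\mathfrak c$ into $\w(F)<\mathfrak c$ as required by Proposition \ref{zeroDim} --- all of which you verify correctly.
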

\begin{proof}
It follows from Remark \ref{fatosDispersos} and Proposition \ref{zeroDim}.
\end{proof}

\begin{rem}
Note that it does not hold in $\ZFC$ that every scattered compact space with weight at most $\mathfrak c$ is homeomorphic to the reminder of a compactification of $\omega$. Indeed, K. Kunen \cite{Kunen} showed that it is relatively consistent with $ZFC$ that $\omega_2 \le \mathfrak c$ and that the ordinal segment $[0,\omega_2]$ is not homeomorphic to the reminder of a compactification of $\omega$ . However, Kunen's result is not relatively consistent with $\MA+\neg\CH$. It seems to be unknown if, under $\MA+\neg CH$, every scattered compact space with weight $\mathfrak c$ is homeomorphic to the reminder of a compactification of $\omega$. Therefore, Corollary \ref{dispersoReminder} is an interesting contribution to this open problem.
\end{rem}

\end{section}

\begin{section}{Nontrivial twisted sums}
\label{sec:twisted}

Let us now explain the relationship between reminders of compactifications of $\omega$ and twisted sums of Banach spaces. If $\gamma \omega$ is a compactification of $\omega$, then we have a natural isometric copy of $c_0$ in $C(\gamma \omega)$; this copy consists of the elements of $C(\gamma \omega)$ that vanish in the reminder $\gamma \omega \setminus \omega$. Following \cite{Plebanek}, we say that a compactification $\gamma \omega$ of $\omega$ is {\it tame} if this natural copy of $c_0$ is complemented in $C(\gamma \omega)$. Therefore, if the compactification $\gamma \omega$ is not tame, then there exists a nontrivial twisted sum of $c_0$ and $C(\gamma \omega \setminus \omega)$, since $C(\gamma \omega)$ quotiented by the natural copy of $c_0$ is isometric to $C(\gamma \omega \setminus \omega)$. In \cite[Theorem~2.8]{Plebanek}, the following necessary condition for the tameness of a compactification is presented.

\begin{lem}\label{Kubis}
If a compactification $\gamma \omega$ of $\omega$ is tame, then its reminder $\gamma \omega \setminus \omega$ carries a strictly positive measure.
\end{lem}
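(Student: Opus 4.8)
The plan is to extract from the complementation a uniformly bounded family of measures living on the reminder whose supports are dense, and then average them into a single strictly positive measure. Write $L=\gamma\omega\setminus\omega$, let $c_0$ denote the natural copy of $c_0$ inside $C(\gamma\omega)$, and recall that $M(\gamma\omega)$, the space of Radon measures on $\gamma\omega$, is the dual of $C(\gamma\omega)$. Tameness provides a bounded linear projection $P\colon C(\gamma\omega)\to c_0$. For each $n\in\omega$ I would consider $\mu_n=P^*\delta_n$, where $\delta_n$ is the Dirac measure at $n$ and $P^*$ is the adjoint of $P$ acting on $M(\gamma\omega)$. Since each characteristic function $\chilow{\{m\}}$ lies in $c_0$ and is therefore fixed by $P$, one computes $\mu_n(\{m\})=\langle\delta_n,P\chilow{\{m\}}\rangle=\delta_{nm}$, so $\mu_n$ agrees with $\delta_n$ on the isolated points of $\omega$. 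Writing $\rho_n=\mu_n-\delta_n$ then gives $\rho_n(\{m\})=0$ for every $m$, hence $|\rho_n|(\omega)=0$ and $\rho_n$ is concentrated on $L$. The uniform bound $\|\rho_n\|\le\|P\|-1$ follows because $\delta_n$ and $\rho_n$ are carried by the disjoint sets $\{n\}$ and $L$, so that $\|\mu_n\|=1+\|\rho_n\|\le\|P\|$.

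Next I would set $\mu=\sum_{n\in\omega}2^{-n}\,|\rho_n|$, a finite positive regular Borel measure on $L$, finiteness being guaranteed by the uniform bound. By construction $\mu(U)>0$ exactly when $|\rho_n|(U)>0$ for some $n$, so $\mu$ is strictly positive if and only if $\bigcup_{n}\mathrm{supp}(\rho_n)$ is dense in $L$. Thus the whole statement reduces to proving this density, which is the heart of the argument; once it is in hand, normalizing $\mu$ to a probability measure finishes the proof.

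To prove density I would argue by contradiction: suppose $U\subseteq L$ is a nonempty open set with $|\rho_n|(U)=0$ for every $n$. Fix $x\in U$ and, using that $\gamma\omega$ is compact Hausdorff, extend $U$ to an open $V\subseteq\gamma\omega$ with $V\cap L=U$ and choose $f\in C(\gamma\omega)$ with $0\le f\le 1$, $f(x)=1$ and $f\equiv 0$ off $V$. Then $\{f>0\}\cap L\subseteq U$, whence $\int f\,d\rho_n=0$ for every $n$; unwinding $\rho_n=P^*\delta_n-\delta_n$, this says precisely that $(Pf)(n)=f(n)$ for all $n\in\omega$. The contradiction comes from two competing constraints on this sequence. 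On one hand $Pf\in c_0$, so $(Pf)(n)\to 0$. On the other hand $x\in L$ is a limit point of $\omega$ in $\gamma\omega$, since no point of $L$ is isolated (an isolated singleton would be an open set disjoint from the dense set $\omega$); hence the open set $\{f>\tfrac12\}$ contains infinitely many integers $n$, for each of which $(Pf)(n)=f(n)>\tfrac12$. This makes $\{n:(Pf)(n)>\tfrac12\}$ infinite, contradicting $Pf\in c_0$.

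The main obstacle is exactly this last density statement: the purely algebraic information that $P$ is a projection must be converted into a topological covering of $L$, and the decisive point is the incompatibility between membership in $c_0$, a vanishing-at-infinity condition along $\omega$, and the persistence of the peak of $f$ at the non-isolated point $x\in L$. Everything else, namely the bookkeeping for $\mu_n$, the uniform bound, and the assembly and normalization of $\mu$, is routine.
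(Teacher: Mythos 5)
Your proof is correct, and every step that needs checking goes through: each $\chilow{\{m\}}$ does lie in the natural copy of $c_0$ (the points of $\omega$ are isolated in $\gamma\omega$, and the copy consists exactly of the continuous functions vanishing on the reminder), so $\rho_n=P^*\delta_n-\delta_n$ kills all singletons of $\omega$ and is concentrated on $\gamma\omega\setminus\omega$; and in the density argument the set $\{f>\tfrac12\}$ indeed contains infinitely many integers, since $\omega$ is dense and finite subsets of $\omega$ are closed. Note that the paper itself gives no proof of Lemma \ref{Kubis} --- it is quoted from \cite[Theorem~2.8]{Plebanek} --- and your argument is essentially the standard one behind that result: dualize the projection witnessing tameness, observe that the defects $P^*\delta_n-\delta_n$ live on the reminder, and convert $Pf\in c_0$ (vanishing along $\omega$) against the peak of a Urysohn function at a point of the reminder to get density of the supports, after which $\sum_n 2^{-n}\vert\rho_n\vert$ is strictly positive.
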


It is well known that if $K$ is a nonseparable scattered compact space, then $K$ does not carry a strictly positive measure. Using this and Parovi\v{c}enko's Theorem, it was shown in \cite[Corrolary~9.6]{Plebanek} that under $\CH$ there exists a nontrivial twisted sum of $c_0$ and $C(K)$, for every nonseparable compact scattered space $K$ with $\w(K)=\mathfrak c$. In the next proposition, we present a version of this result under $\MA+\neg \CH$.

\begin{prop}\label{MApesopequenoNaoSep}
Assume $\MA+\neg \CH$. Let $K$ be a compact Hausdorff and scattered space with $\w(K)<\mathfrak c$. If $K$ is nonseparable, then there exists a nontrivial twisted sum of $c_0$ and $C(K)$.
\end{prop}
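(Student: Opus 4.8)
The plan is to reduce this to the machinery already developed, specifically to Lemma \ref{Kubis} and Corollary \ref{dispersoReminder}, by producing a suitable compactification of $\omega$ whose remainder is $K$ and which fails to be tame. Since $K$ is compact, scattered and nonseparable, it carries no strictly positive measure; by Lemma \ref{Kubis}, any compactification $\gamma\omega$ with remainder homeomorphic to $K$ is automatically \emph{not} tame, and hence yields a nontrivial twisted sum of $c_0$ and $C(\gamma\omega\setminus\omega)\cong C(K)$. So the entire burden is to exhibit \emph{some} compactification of $\omega$ whose remainder is homeomorphic to $K$, i.e. to realize $K$ as $\gamma\omega\setminus\omega$.

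The realization is exactly the content of the remainder results from Section \ref{sec:reminders}, but those require $\w(K)=\mathfrak c$, whereas here $\w(K)<\mathfrak c$. First I would handle the range $\omega_1\le\w(K)<\mathfrak c$ by invoking Dow--Hart--style absorption: under $\MA+\neg\CH$ the result of \cite{Dowen} gives that every Boolean algebra of cardinality $<\mathfrak c$ embeds in $\wp(\omega)/\Fin$. By Remark \ref{fatosDispersos} a scattered Boolean space has $|\clop(K)|=\w(K)<\mathfrak c$, so $\clop(K)$ embeds in $\wp(\omega)/\Fin$; by the Stone-duality criterion recalled in the Introduction this means $K$ is a continuous image of $\beta\omega\setminus\omega$, equivalently $K$ is the remainder of a compactification of $\omega$. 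I should note that $K$ is automatically Boolean (zero-dimensional), since a scattered compact Hausdorff space is totally disconnected.

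The only genuine gap is whether $K$ is Boolean at all and whether the embedding argument covers the smallest weights. A nonseparable scattered compact space has $\w(K)\ge\omega_1$, so the trivial metrizable case ($\w(K)=\omega$) does not arise, and the embedding from \cite{Dowen} applies across the whole interval $\omega_1\le\w(K)<\mathfrak c$. Thus the steps, in order, are: (1) observe $K$ is a Boolean space and $|\clop(K)|=\w(K)<\mathfrak c$; (2) apply \cite{Dowen} to embed $\clop(K)$ into $\wp(\omega)/\Fin$, whence $K$ is a remainder $\gamma\omega\setminus\omega$; (3) note $K$ carries no strictly positive measure because it is nonseparable and scattered; (4) conclude by the contrapositive of Lemma \ref{Kubis} that $\gamma\omega$ is not tame, producing the desired nontrivial twisted sum. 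The main obstacle I anticipate is purely bookkeeping — ensuring the embedding of \cite{Dowen} is genuinely available for \emph{all} weights strictly below $\mathfrak c$ (rather than only $<\mathfrak c$ in a guarded sense) and that the measure-theoretic input is correctly the nonseparable-scattered statement quoted in the text.
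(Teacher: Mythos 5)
Your proposal is correct and takes essentially the same approach as the paper: realize $K$ as the remainder of a compactification of $\omega$ using the van Douwen--Przymusi\'nski result under $\MA$, then combine the fact that a nonseparable scattered compact space carries no strictly positive measure with (the contrapositive of) Lemma \ref{Kubis}. The only cosmetic difference is that you obtain the remainder realization from the Boolean-algebra embedding form of \cite{Dowen} via Stone duality, after observing that $K$ is zero-dimensional, whereas the paper cites the topological form of the same result (every compact space of weight $<\mathfrak c$ is a remainder) directly.
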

\begin{proof}
It follows from the discussion above, Lemma \ref{Kubis} and the fact that $\MA+\neg\CH$ implies that every compact space $K$ with $\w(K)<\mathfrak c$ is homeomorphic to the reminder of a compactification of $\omega$ \cite{Dowen}.
\end{proof}

Another class of compact spaces discussed in \cite{Plebanek} that does not carry a strictly positive measure is the class of nonseparable compact lines. Recall that a {\it compact line} is a totally ordered space that is compact, when endowed with the order topology. It was shown in \cite[Theorem~8.1]{Plebanek} that if we assume $\CH$, then there exists a nontrivial twisted sum of $c_0$ and $C(K)$, for every nonseparable compact line $K$. In Proposition \ref{retaCompacta}, we prove that this actually holds in $\ZFC$.

\begin{prop}\label{retaCompacta}
If $K$ is a nonseparable compact line, then there exists a nontrivial twisted sum of $c_0$ and $C(K)$.
\end{prop}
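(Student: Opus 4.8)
The plan is to reduce the general case to a space small enough to be realized as a reminder, and then to run exactly the argument from the scattered case. Concretely, I would first produce a nonseparable compact line $L$ with $\w(L)\le\omega_1$ that is a continuous increasing image of $K$. Granting this, $L$ is a nonseparable compact line, hence it carries no strictly positive measure; by Parovi\v{c}enko's Theorem $L$ is homeomorphic to the reminder $\gamma\omega\setminus\omega$ of some compactification $\gamma\omega$ of $\omega$; and Lemma~\ref{Kubis} then forces $\gamma\omega$ to be non-tame, so there is a nontrivial twisted sum of $c_0$ and $C(L)$. It then remains to transfer nontriviality from $C(L)$ to $C(K)$ through complementation.

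For the reduction, I would exploit nonseparability to fix a left-separated transfinite sequence $\langle x_\alpha:\alpha<\omega_1\rangle$ in $K$, i.e. $x_\alpha\notin \overline{\{x_\beta:\beta<\alpha\}}$ for every $\alpha$; such a sequence exists precisely because $K$ is not separable. Writing $C_\alpha=\overline{\{x_\beta:\beta<\alpha\}}$, the point $x_\alpha$ lies in an open interval disjoint from the closed set $C_\alpha$, so the suprema of $C_\alpha$ below $x_\alpha$ and infima above $x_\alpha$ are strictly separated from $x_\alpha$. Using that on a compact line an increasing continuous function can separate a point from a closed set lying entirely below it (and dually above it), I can pick increasing continuous functions $h_\alpha^-,h_\alpha^+:K\to[0,1]$ witnessing these separations. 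Let $L$ be the quotient of $K$ by the equivalence relation declaring $x\sim y$ when $h_\alpha^{-}(x)=h_\alpha^{-}(y)$ and $h_\alpha^{+}(x)=h_\alpha^{+}(y)$ for all $\alpha$, and let $q:K\to L$ be the quotient map. Since each $h_\alpha^{\pm}$ is increasing its fibers are convex, so the $\sim$-classes are closed convex sets and $L$ is again a compact line with $q$ increasing and continuous; moreover $\w(L)\le\omega_1$ because the $\omega_1$ induced functions separate the points of $L$. Finally the images $q(x_\alpha)$ stay left-separated in $L$, since $h_\alpha^{-},h_\alpha^{+}$ produce an open neighbourhood of $q(x_\alpha)$ missing every $q(x_\beta)$ with $\beta<\alpha$; hence $L$ is nonseparable, as required.

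The last step is to pass from $C(L)$ back to $C(K)$. Since $q:K\to L$ is a continuous increasing surjection between compact lines, it admits a regular averaging operator, so $C(L)$ is isomorphic to a complemented subspace $q^{*}C(L)$ of $C(K)$. Writing $C(K)\cong q^{*}C(L)\oplus W$ and forming the direct sum of the nontrivial twisted sum of $c_0$ and $C(L)$ with the trivial sequence on $W$, one obtains a twisted sum of $c_0$ and $C(K)$ whose class is nonzero in $\Ext(C(K),c_0)$, which is the desired nontrivial twisted sum.

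The point I expect to be most delicate is the complementation in this final step: a regular averaging operator for $q$ is immediate when the fibers are singletons, but the quotient above may collapse genuine nondegenerate intervals of $K$, and a naive fiberwise average (or a one-sided endpoint selection) need not depend continuously on the base point. I would build the operator from the endpoints of the fibers together with a linear correction across each collapsed interval, using the order structure to keep it continuous; this is where the specific geometry of compact lines, rather than their mere compactness, is essential.
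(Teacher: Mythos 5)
Your reduction scheme (left-separated $\omega_1$-sequence, increasing separating functions $h_\alpha^\pm$, monotone quotient $L$ of weight at most $\omega_1$, Parovi\v{c}enko, Sapounakis' theorem and Lemma~\ref{Kubis}) is workable up to and including the construction of a nontrivial twisted sum of $c_0$ and $C(L)$, modulo one point you state without proof: that the left-separated images $q(x_\alpha)$ make $L$ nonseparable. A compact space can be separable while containing an uncountable left-separated subspace, so here you must invoke a property of lines, e.g.\ that separable compact lines are hereditarily separable (true, since they embed in the split interval, but it needs to be said). The fatal problem, however, is exactly the step you flag as delicate: the claim that a continuous increasing surjection $q:K\to L$ between compact lines admits a regular averaging operator is false, and no ``linear correction across collapsed intervals'' can repair it. A regular averaging operator amounts to a weak*-continuous selection $y\mapsto \mu_y$ of probability measures on the fibers $q^{-1}(y)=[a_y,b_y]$. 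Suppose $y$ is a two-sided limit point of $L$ whose fiber is nondegenerate, and pick $f\in C(K)$ with $f(a_y)=0$ and $f(b_y)=1$; for $y'<y$ the measure $\mu_{y'}$ is supported on $\{x\le a_y\}$ and for $y'>y$ on $\{x\ge b_y\}$, so weak*-continuity forces $\mu_y(f)$ to equal both the left limit $0$ and the right limit $1$. Hence no such selection exists as soon as a nondegenerate fiber sits over a two-sided limit point --- and your quotient \emph{must} create such fibers, since $\w(K)$ can vastly exceed $\omega_1$ and only countably many collapsed intervals can be hidden at jumps of a ccc line. Concrete instances of the failure: the canonical maps from the split interval onto $[0,1]$, or from the lexicographic square onto $[0,1]$. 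Nor can you retreat to the weaker claim that $q^*C(L)$ is merely complemented in $C(K)$ by some abstract projection: statements of that kind are themselves triviality assertions about twisted sums (e.g.\ $C$ of the split interval is precisely a twisted sum of $C[0,1]$ and $c_0(\mathfrak c)$ arising from such a monotone map), so assuming complementation begs a question of the same nature as the one being proved.

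For comparison, the paper's proof avoids any transfer between spaces by never leaving $K$: it splits on ccc. If $K$ fails ccc, the result is quoted from \cite[Theorem~8.1]{Plebanek}. If $K$ has ccc, then a ccc compact line is first countable \cite[3.12.4]{Engelkin}, and Bell's $\ZFC$ theorem \cite[Corollary~3.2]{Bell} says that every first-countable compact line --- with no weight restriction whatsoever --- is homeomorphic to the reminder of a compactification of $\omega$; then Sapounakis \cite{Sap} and Lemma~\ref{Kubis} apply directly to $K$. So the role you wanted Parovi\v{c}enko plus a quotient to play is played by Bell's theorem applied to $K$ itself, which is exactly what dissolves the complementation obstruction your argument runs into. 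If you want to salvage your outline, you would need either Bell's result (making the quotient unnecessary) or a genuinely new argument for transferring nontriviality of $\Ext\big(C(L),c_0\big)$ back through $q^*$ without complementation, in the spirit of the homological push/pull used in Lemma~\ref{reduzPeso}.
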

\begin{proof}
The case when $K$ does not have ccc was already solved in \cite[Theorem~8.1]{Plebanek}. If $K$ has ccc, then $K$ is first-countable \cite[3.12.4]{Engelkin}. In \cite[Corollary~3.2]{Bell}, M. Bell proved that every first-countable compact line is homeomorphic to the reminder of a compactification of $\omega$, therefore the result follows from the fact that $K$ does not carry a strictly positive measure \cite{Sap} and from Lemma \ref{Kubis}.
\end{proof}

A fundamental ingredient to establish our main result (Theorem \ref{main}) is Lemma \ref{reduzPeso} below. This result is an adaptation of \cite[Theorem~9.1]{Plebanek}. It is interesting to observe that even though the proof of \cite[Theorem~9.1]{Plebanek} is purely topological, we obtained Lemma \ref{reduzPeso} using homological tools, inspired by \cite{Castilloscattered}. For the definitions of the homological objects that appear in the proof of Lemma \ref{reduzPeso}, see \cite{Castilloscattered, ThreeSpace}. Given topological spaces $\mathcal X$ and $\mathcal Y$, we denote by $C(\mathcal X,\mathcal Y)$ the set of continuous functions from $\mathcal X$ to $\mathcal Y$. As usual, $\dens(\mathcal X)$ denotes the density of $\mathcal X$. Moreover, given Banach spaces $X$ and $Y$, we denote by $\mathcal{L}(X,Y)$ the space of bounded operators from $X$ to $Y$.

\begin{lem}\label{reduzPeso}
Let $K$ be a compact Hausdorff and scattered space. Assume that there exists a closed subset $F$ of $K$ satisfying:
\begin{enumerate}
\item[(a)] $F$ is homeomorphic to the reminder of a compactification of $\omega$;
\item[(b)] $\vert C(F,F) \vert>\mathfrak c$.
\end{enumerate}
Then there exists a nontrivial twisted sum of $c_0$ and $C(K)$.
\end{lem}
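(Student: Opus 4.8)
The plan is to realize the required twisted sum as a pullback of the canonical twisted sum attached to the compactification furnished by (a), and to force its nontriviality by a cardinality argument fueled by (b). Fix a compactification $\gamma\omega$ with $\gamma\omega\setminus\omega=F$ and let $\tau$ be the canonical twisted sum $0\to c_0\to C(\gamma\omega)\xrightarrow{q}C(F)\to0$, where $q$ is restriction to $F$ and $c_0$ is the copy consisting of functions vanishing on $F$. Each $\phi\in C(F,F)$ induces the composition operator $\phi^*\in\mathcal L(C(F),C(F))$, and I set $\tau_\phi=(\phi^*)^*\tau\in\Ext(C(F),c_0)$. Applying $\mathcal L(C(F),-)$ to $\tau$ gives the exact row $\mathcal L(C(F),C(\gamma\omega))\xrightarrow{q_*}\mathcal L(C(F),C(F))\xrightarrow{\partial}\Ext(C(F),c_0)$ with $\partial(\phi^*)=\tau_\phi$; hence $\tau_\phi=0$ precisely when $\phi^*$ lies in the image of $q_*$, i.e.\ when $\phi^*$ lifts through $q$.

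The heart of the argument is to bound the set of $\phi$ with $\tau_\phi=0$. A lift of $\phi^*$ through $q$ is a bounded operator $\ell\colon C(F)\to C(\gamma\omega)\subseteq\ell_\infty(\omega)$ with $q\ell=\phi^*$; writing $\ell f=(\mu_n(f))_{n\in\omega}$, it is exactly a uniformly bounded sequence $(\mu_n)_{n\in\omega}$ in $M(F)$ such that each $\ell f$ extends continuously to $\gamma\omega$, with boundary value $\phi^*f=f\circ\phi$. Continuity forces $\mu_n\xrightarrow{w^*}\delta_{\phi(p)}$ as $n\to p$ in $\gamma\omega$, for every $p\in F$; since distinct points give distinct point masses, the sequence $(\mu_n)$ determines $\phi$. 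Thus sending each trivial $\phi$ to a witnessing sequence is injective, so $|\{\phi:\tau_\phi=0\}|\le|M(F)|^{\aleph_0}$. Since $F$ is scattered, every measure on $F$ is atomic and $M(F)\cong\ell_1(F)$; as $F$ is moreover (zero-dimensional) scattered with $|F|\le\mathfrak c$ in our situation, this yields $|M(F)|\le\mathfrak c$ and hence $|\{\phi:\tau_\phi=0\}|\le\mathfrak c$. By hypothesis (b) we may therefore choose $\phi_0\in C(F,F)$ with $\tau_{\phi_0}\ne0$, i.e.\ a nontrivial twisted sum of $c_0$ and $C(F)$.

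It remains to transfer nontriviality from $C(F)$ to $C(K)$. The restriction $r\colon C(K)\to C(F)$ is a surjection (Tietze), and the sought object is $r^*\tau_{\phi_0}\in\Ext(C(K),c_0)$, which equals $((\iota\phi_0)^*)^*\tau$ for the inclusion $\iota\colon F\hookrightarrow K$. Because $K$ is scattered there is a bounded linear extension operator $E\colon C(F)\to C(K)$ with $rE=\Id_{C(F)}$; consequently $E^*r^*=(rE)^*=\Id$ on $\Ext(C(F),c_0)$, so $r^*$ is injective and $r^*\tau_{\phi_0}\ne0$. This produces the desired nontrivial twisted sum of $c_0$ and $C(K)$.

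The main obstacle is twofold. First, the counting step must genuinely recover $\phi$ from the interpolating measures and must keep $|M(F)|$ at $\mathfrak c$; scatteredness of $F$ is essential here, as it collapses $M(F)$ to $\ell_1(F)$ and so caps its cardinality at $\mathfrak c$, exactly the gap opened by (b). Second, and this is where scatteredness of the ambient $K$ is used rather than that of $F$, one needs the bounded extension operator $E$; I expect this to be built by induction on the Cantor--Bendixson levels, using at each step that the isolated points form a clopen discrete set of the zero-dimensional space. This is the delicate point, since it is precisely what fails for non-scattered $K$ (for $\gamma\omega=\beta\omega$ it is the non-complementation of $c_0$ in $\ell_\infty$), and it is what legitimizes the passage from $F$ to $K$ in $\ZFC$.
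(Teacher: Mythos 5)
Your first half is essentially correct and runs parallel to the paper's own counting argument: where you pull back the canonical sequence $0\to c_0\to C(\gamma\omega)\to C(F)\to 0$ along composition operators $\phi^*$ and bound the liftable ones via interpolating sequences of measures, using scatteredness to get $\vert M(F)\vert\le\mathfrak c$ (with $\vert F\vert=\w(F)\le\mathfrak c$ from (a)), the paper computes $\vert\Ext(C(F),c_0)\vert=\vert\mathcal L(C(F),\ell_\infty/c_0)/W\vert$, bounds $\vert W\vert\le\vert C(F)^*\vert^\omega\le\mathfrak c$, and injects $C(F,F)$ into $\mathcal L(C(F),\ell_\infty/c_0)$ through the surjection $\beta\omega\setminus\omega\to F$. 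These are the same mechanism, and in fact your version yields slightly more than you extract: since the liftable operators form a set of size at most $\mathfrak c$ and pullback is additive, the fibers of $\phi\mapsto\tau_\phi$ have size at most $\mathfrak c$, so $\vert\Ext(C(F),c_0)\vert>\mathfrak c$ --- a strengthening you will actually need, as explained below.

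The transfer step contains a genuine gap: the claim that scatteredness of $K$ yields a bounded linear extension operator $E\colon C(F)\to C(K)$ is false, and not only for non-scattered $K$. In $\ZFC$ there is a Luzin almost disjoint family $\mathcal A$, and for the associated scattered compactum $K_{\mathcal A}$ of height $3$ (the one-point compactification of the Mr\'owka space) with $F=K_{\mathcal A}\setminus\omega$, the kernel of the restriction map is the natural copy of $c_0$ and is uncomplemented, i.e.\ no such $E$ exists \cite{Plebanek}; the existence of such operators is precisely the ``local extension property'' studied in \cite{Lep}, which under $\MA$ holds only for small weights. Worse, your claim is self-refuting inside your own proof: $\gamma\omega=\omega\cup F$ is itself compact and scattered with $F$ closed in it, so your claimed extension operator $C(F)\to C(\gamma\omega)$ would split the canonical sequence $\tau$ (via the projection $\Id-E\circ q$ onto $c_0$), making every pullback $\tau_\phi$ trivial and contradicting your first half. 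The paper circumvents this obstruction homologically: it applies \cite[Lemma~4]{Castilloscattered} to the restriction $R\colon C(K)\to C(F)$ to obtain closed subspaces $X\subseteq C(K)$ and $Y\subseteq\Ker(R)$ with $\dens(Y)\le\dens\big(C(F)\big)\le\mathfrak c$ fitting into a commutative diagram with exact rows, and shows that $\Ext\big(C(K),c_0\big)=0$ would produce a surjection from $\mathcal L(Y,c_0)$ onto $\Ext\big(C(F),c_0\big)$; since $\vert\mathcal L(Y,c_0)\vert\le\vert Y^*\vert^\omega\le\mathfrak c$ (Asplundness of $C(K)$ --- this is where scatteredness of $K$ genuinely enters, not in building extension operators), this contradicts $\vert\Ext(C(F),c_0)\vert>\mathfrak c$. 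Note that this transfer consumes the full strength of the estimate $>\mathfrak c$, not the mere nonvanishing you derived, which is exactly why hypothesis (b) demands $\vert C(F,F)\vert>\mathfrak c$.
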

\begin{proof}
Note that condition (a) implies that $\w(F) \le \mathfrak c$, hence Remark \ref{fatosDispersos} ensures that $\vert F \vert \le \mathfrak c$. Initially, we will show that $\big \vert \Ext\big(C(F),c_0\big) \vert> \mathfrak c$. It holds that $\vert \Ext\big(C(F),c_0\big) \vert=\vert\mathcal{L} \big(C(F),\ell_\infty/c_0\big)/W\vert$, where $W$ denotes the closed subspace of $\mathcal{L} \big(C(F),\ell_\infty/c_0\big)$ consisting of the operators that admit a lifting to $\ell_\infty$ \cite[Proposition~1.4.f]{ThreeSpace}. To establish our estimate, we will prove that $\vert \mathcal{L} \big(C(F),\ell_\infty/c_0\big)\vert>\mathfrak c$ and that $\vert W \vert \le \mathfrak c$. Clearly $\vert W \vert \le \big\vert \mathcal{L}\big(C(F),\ell_\infty\big)\big \vert$ and it is easy to see that $\vert \mathcal{L}\big(C(F),\ell_\infty\big)\big \vert \le\vert C(F)^* \vert ^\omega$. Therefore we conclude that $\vert W \vert \le\mathfrak c$, since $\vert C(F)^* \vert \le \mathfrak c$ \cite[Theorem~12.28]{Fabian}.
Let $\varphi:\beta \omega \setminus \omega \rightarrow F$ be the continuous onto map given by (a) and note that the following maps are injective:
\[C(F,F) \overset{T}\longrightarrow C(\beta \omega \setminus \omega,F) \overset{S}\longrightarrow \mathcal{L}\big(C(F),C(\beta \omega \setminus \omega)\big),\]
where $T(f)=f \circ \varphi$, $S(\phi)=\phi^*$ and $\phi^*:C(F) \rightarrow C(\beta \omega \setminus \omega)$ is given by $\phi^*(g)=g \circ \phi$. Then condition (b) implies that $\vert \mathcal{L} \big(C(F),\ell_\infty/c_0\big)\vert>\mathfrak c$. Now consider the restriction operator $R:C(K) \rightarrow C(F)$. It follows from \cite[Lemma~4]{Castilloscattered} that there exist a closed subspace $X$ of $C(K)$ and a closed subspace $Y$ of $\Ker(R)$ such that $\dens(Y) \le \dens\big(C(F)\big)$ and such that the diagram below has exact rows and commutes:
\begin{equation}\label{diagram}
\xymatrix@C+8pt@R+4pt{
0\ar[r] &\Ker(R) \vbox to 10pt{\vfil}\ar@{^{(}->}[r] & C(K) \ar[r]^{R} & C(F) \ar[r] &0\\
0 \ar[r] &Y \vbox to 10pt{\vfil}\ar@{^{(}->}[u]\vbox to 5pt{\vfil}\ar@{^{(}->}[r] &X \vbox to 10pt{\vfil}\ar@{^{(}->}[u] \ar[r]^{R|_X} &C(F) \ar[u]_{\text{identity}} \ar[r] &0}
\end{equation}
Assuming by contradiction that $\Ext \big(C(K),c_0\big)$ is zero, one can argue as in the proof of \cite[Lemma~5]{Castilloscattered}
using diagram \eqref{diagram} to obtain a surjective map from $\mathcal{L}(Y, c_0)$ to $\Ext\big(C(F),c_0\big)$. However this is a contradiction, because:
\[\vert \mathcal{L}(Y, c_0) \vert \le \vert Y^* \vert^\omega \le \mathfrak c,\]
where the last inequality follows from the fact that $\dens(Y^*)=\dens(Y)$, since $C(K)$ is Asplund (see \cite[Theorem~12.29]{Fabian} and \cite[Theorem~6]{Yost}).
\end{proof}

We are now ready to prove our main result.
\begin{teo}\label{main}
Assume $\MA+\neg\CH$. If $K$ is a compact Hausdorff and scattered space with finite height and $\w(K) \ge \mathfrak c$, then there exists a nontrivial twisted sum of $c_0$ and $C(K)$.
\end{teo}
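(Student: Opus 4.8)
The plan is to reduce the theorem to producing a single well-chosen closed subspace $F\subseteq K$ to which Lemma~\ref{reduzPeso} applies, namely a copy of the one-point compactification $\alpha(D)$ of a discrete set $D$ with $\vert D\vert=\mathfrak c$; write $F=D\cup\{\infty\}$. Such an $F$ is scattered and compact with $\w(F)=\vert F\vert=\mathfrak c$ and a single non-isolated point, so $\{p\in F:\w(F,p)>\omega\}=\{\infty\}$ has cardinality $1<\mathfrak c$; hence Corollary~\ref{dispersoReminder} yields condition~(a), i.e. $F$ is homeomorphic to the reminder of a compactification of $\omega$. For condition~(b), every bijection $\sigma$ of $D$ extends (fixing $\infty$) to a continuous self-map of $F$, continuity at $\infty$ holding because $\sigma$ and $\sigma^{-1}$ carry cofinite sets to cofinite sets, so $\vert C(F,F)\vert\ge 2^{\mathfrak c}>\mathfrak c$. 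With both hypotheses of Lemma~\ref{reduzPeso} verified, the nontrivial twisted sum follows. Thus the entire content is the purely topological claim: every scattered compact space of finite height with $\vert K\vert=\w(K)\ge\mathfrak c$ (the first equality by Remark~\ref{fatosDispersos}) contains a closed copy of $\alpha(D)$ with $\vert D\vert=\mathfrak c$.

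To prove this claim I would induct on the Cantor--Bendixson height $n$ of $K$, using crucially that $\MA$ forces $\mathfrak c$ to be a regular cardinal. Let $J=K\setminus K'$ be the dense set of isolated points. If $\vert J\vert<\mathfrak c$ then $\vert K'\vert\ge\mathfrak c$, and since $K'$ is a closed, hence scattered, subspace of strictly smaller height, the inductive hypothesis applied to $K'$ finishes this case. So assume $\vert J\vert\ge\mathfrak c$. For each $p$ in $L_1:=K'\setminus K''$ (the points isolated in $K'$), zero-dimensionality of $K$ provides a clopen $W_p\ni p$ with $W_p\cap K'=\{p\}$, whence $W_p=\alpha(W_p\cap J)$; put $O=\bigcup_{p\in L_1}W_p$. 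Then $K\setminus O$ is closed and $(K\setminus O)\cap K'=K'\setminus L_1=K''$, so its height is at most $n-1$. If $\vert K\setminus O\vert\ge\mathfrak c$, the inductive hypothesis applied to the closed subspace $K\setminus O$ again finishes the proof.

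The remaining case $\vert K\setminus O\vert<\mathfrak c$ is where regularity does the work, and it is the step I expect to be the main obstacle. Here $\vert J\cap O\vert\ge\mathfrak c$, since $\vert J\setminus O\vert\le\vert K\setminus O\vert<\mathfrak c$, and $J\cap O=\bigcup_{p\in L_1}(J\cap W_p)$ is a union of $\vert L_1\vert\le\vert K'\vert<\mathfrak c$ sets; as $\mathfrak c$ is regular it is not a union of fewer than $\mathfrak c$ sets each of size $<\mathfrak c$, so some $W_{p}$ satisfies $\vert J\cap W_{p}\vert\ge\mathfrak c$. Since $W_{p}=\alpha(J\cap W_{p})$, any $D\subseteq J\cap W_{p}$ with $\vert D\vert=\mathfrak c$ gives the desired closed copy $D\cup\{p\}$ of $\alpha(D)$. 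The two delicate points are that the finite-height hypothesis is exactly what makes the induction descend through $K'$ and $K\setminus O$ in finitely many steps, and that regularity of $\mathfrak c$ is indispensable: were $\mathfrak c$ singular, one could spread $\mathfrak c$ isolated points over $\mathrm{cf}(\mathfrak c)<\mathfrak c$ limit points, each receiving fewer than $\mathfrak c$ of them, thereby preventing the concentration of mass at a single point that the argument extracts.
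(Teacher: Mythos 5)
Your first paragraph is fine: reducing to Lemma~\ref{reduzPeso} with $F=\alpha(D)$, checking hypothesis (a) via Corollary~\ref{dispersoReminder} (the set of points of uncountable character is the singleton $\{\infty\}$) and hypothesis (b) via the $2^{\mathfrak c}$ bijections of $D$, is a clean special case of what the paper actually does. But the purely topological claim that carries all the weight --- every finite height scattered compactum with $\w(K)\ge\mathfrak c$ contains a closed copy of $\alpha(D)$ with $\vert D\vert=\mathfrak c$ --- breaks exactly at the step you flagged. In subcase 2b you assert $\vert L_1\vert\le\vert K'\vert<\mathfrak c$, but nothing gives $\vert K'\vert<\mathfrak c$: from $\vert K\setminus O\vert<\mathfrak c$ you only obtain $\vert K''\vert<\mathfrak c$, since $K''\subset K\setminus O$, whereas $L_1$ is contained in $O$ and can have size $\vert K\vert$. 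Concretely, let $K$ be the one-point compactification of the topological sum of $\mathfrak c$ copies of the convergent sequence $\alpha(\omega)$. Then $\vert J\vert=\vert L_1\vert=\mathfrak c$ and $K''=\{\infty\}$; any clopen $W_p$ with $W_p\cap K'=\{p\}$ misses $\infty$, hence is a compact subset of the sum, hence contained in finitely many copies and countable. Taking each $W_p$ to be the copy containing $p$ gives $K\setminus O=\{\infty\}$, so your argument lands in subcase 2b with $J\cap O$ expressed as a union of $\mathfrak c$ countable sets, and the regularity pigeonhole has nothing to act on. (The conclusion itself survives in this example --- a transversal picking one isolated point from each copy, together with $\infty$, is a closed copy of $\alpha(D)$ --- but your induction does not find it; repairing the claim seems to require a genuinely different argument, descending from the top of the Cantor--Bendixson hierarchy rather than from the bottom.)

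This is precisely how the paper sidesteps the obstacle: it never insists on a genuine $\alpha(D)$, because Lemma~\ref{reduzPeso} together with Corollary~\ref{dispersoReminder} tolerates any closed $F$ with fewer than $\mathfrak c$ points of uncountable character. After reducing to $\vert K^{(N-1)}\vert=1$ and disposing by induction on height of the case where some clopen hull $S^j_\gamma$ of a level-$j$ point has weight $\ge\mathfrak c$ (using that $C(S^j_\gamma)$ is complemented in $C(K)$), the paper takes the \emph{greatest} $j$ with $\vert I_j\vert\ge\mathfrak c$ and sets $F=K^{(j+1)}\cup I$ with $I\subset I_j$ of size $\mathfrak c$. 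Regularity of $\mathfrak c$ is then applied where it actually bites: by maximality of $j$, the finitely many levels above $j$ each have size $<\mathfrak c$, so $F$ has $<\mathfrak c$ non-isolated points, and the set $A=I\setminus\bigcup_{k>j}\bigcup_{\gamma\in I_k}S^k_\gamma$ still has size $\mathfrak c$; bijections of $A$, extended by the identity on $F\setminus A$, give $\vert C(F,F)\vert>\mathfrak c$. In your scheme the problematic family $L_1$ sits at the bottom level, where no cardinality bound is available; in the paper's scheme the analogous family sits above level $j$, where maximality plus regularity bounds it. If you want to keep your stronger $\alpha(D)$ statement, run your induction downward from $K^{(N-1)}$ in the same spirit; as written, subcase 2b is a genuine gap.
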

\begin{proof}
Denote by $N$ the height of $K$. Firstly, let us prove that we can assume that $K^{(N-1)}$ has only one point. Write $K^{(N-1)}=\{p_1,\ldots,p_k\}$, where $k=\vert K^{(N-1)}\vert$. Clearly, there exist disjoint clopen subsets $C_1,\ldots,C_k$ of $K$ such that $K=\bigcup_{i=1}^{k} C_i$ and $p_i \in C_i$, for every $i=1,\ldots,k$. It is easy to see that $C_i^{(N-1)}=\{p_i\}$, for every $i=1,\ldots,k$ and that there exists an index $i_0$ such that $w(C_{i_0}) \ge \mathfrak c$. Since the canonical copy of $C(C_{i_0})$ in $C(K)$ is complemented, a nontrivial twisted sum of $c_0$ and $C(C_{i_0})$ yields a nontrivial twisted sum of $c_0$ and $C(K)$. Set $I_j=K^{(j)} \setminus K^{(j+1)}$, for $j=0,\ldots,N-2$. For each $j$ and each $\gamma \in I_j$, since $\gamma$ is isolated in $K^{(j)}$, we can choose $S^j_\gamma \in \clop(K)$ with $S^j_\gamma \cap K^{(j)}=\{\gamma\}$ and $\w(S_\gamma^j)=\vert S_\gamma^j\vert=\w(K, \gamma)$. We proceed by induction on the height of $K$. Since $w(K) \ge \mathfrak c$, the first step of the inductive process is $N=2$. In this case $K$ is the one-point compactification of a discrete space $I$ of size greater or equal to $\mathfrak c$. It is well known that $C(K)$ is isomorphic to $c_0(I)$ and that there exists a nontrivial twisted sum of $c_0$ and $c_0(I)$ \cite[\S 2]{Castilloscattered}. Now fix $N \ge 3$ and note that if there exist $j\le N-2$ and $\gamma \in I_j$ such that $\vert S_\gamma^j \vert \ge \mathfrak c$, then the induction hypothesis implies that there exists a nontrivial twisted sum of $c_0$ and $C(S_\gamma^j)$. Since $C(S_\gamma^j)$ is complemented in $C(K)$, it follows that there exists a nontrivial twisted sum of $c_0$ and $C(K)$. Now assume that $\vert S_\gamma^j \vert<\mathfrak c$, for every $j\le N-2$ and every $\gamma \in I_j$. Let $j$ be the greatest natural number with $\vert I_j \vert \ge \mathfrak c$.  and fix a subset $I$ of $I_j$ with $\vert I \vert=\mathfrak c$. Consider the closed subset $F=K^{(j+1)} \cup I$ of $K$ and note that
\[\vert \{p \in F: \w(F,p)>\omega\} \vert < \mathfrak c.\]
Therefore, Corollary \ref{dispersoReminder} ensures that $F$ is
homeomorphic to the reminder of a compactification of $\omega$. Finally, to conclude the result using Lemma \ref{reduzPeso}, let us prove that $\vert C(F,F) \vert > \mathfrak c$. Set $A= I \setminus \bigcup_{k=j+1}^{N-2} \bigcup_{\gamma \in I_k}S_\gamma^k$ and note that $\vert A \vert=\mathfrak c$, since $\mathfrak c$ is regular under $\MA$. Moreover any bijection $h:A \rightarrow A$ induces a continuous map $\varphi_h: F\rightarrow F$ that fixes every point of $F \setminus A$ and such that $\varphi_h|_A=h$.
\end{proof}

\begin{rem}
Note that the problem of existence of nontrivial twisted sums of $c_0$ and $C(K)$, for finite height spaces $K$ is now settled under $\MA$. Indeed, in \cite{Castilloscattered}, it was solved under $\CH$. Under $\MA+\neg \CH$, if $\w(K)<\mathfrak c$, then the separable case was solved in \cite{Lep, Plebanek} and the nonseparable case is solved by Proposition \ref{MApesopequenoNaoSep}. Finally, the case when $\w(K) \ge \mathfrak c$ is solved by Theorem \ref{main}.
\end{rem}

\end{section}

\noindent\textbf{Acknowledgments.}\enspace The author wishes to thank Daniel Victor Tausk for valuable discussions during the preparation of this work.


\begin{thebibliography}{99}

\bibitem{Bell} M. G. Bell, {\em A first countable compact space that is not an $N^*$ image}, Topology Appl. 35 (1990), 153--156.

\bibitem{CastilloKalton} F. Cabello, J. M. F. Castillo, N. J. Kalton, and D. T. Yost, {\em Twisted sums with $C(K)$ spaces}, Trans.\ Amer.\ Math.\ Soc.
355 (2003), 4523--4541.

\bibitem{Castilloscattered} J. M. F. Castillo, {\em Nonseparable $C(K)$-spaces can be twisted when $K$ is a finite height compact}, Topology Appl. 198 (2016), 107--116.

\bibitem{ThreeSpace} J. M. F. Castillo and M. González, {\em Three-space problems in Banach space theory}, Springer, Berlin, 1997.

\bibitem{Claudia} C. Correa, {\em Additional set-theoretic assumptions and twisted sums of Banach spaces}, in: Logic around the world: On the occasion of 5th Annual Conference of the Iranian Association of Logic, M. Pourmahdian and A. S. Daghighi (eds.), A.F.J. Publishing, Tehran, 2017, 73--86.

\bibitem{Lep} C. Correa and D. V. Tausk, {\em Local extension property for finite height spaces}, to appear in Fund.\ Math. The preprint can be found at
arxiv.org/pdf/1801.08619.pdf

\bibitem{ExtCKc0} C. Correa and D. V. Tausk, {\em Nontrivial twisted sums of $c_0$ and $C(K)$}, J. Funct.\ Anal. 270 (2016), 842--853.

\bibitem{ValdiviaTree} C. Correa and D.V. Tausk, {\em Small Valdivia compacta and trees}, Studia Math. 235 (2016), 117--135.

\bibitem{Dowen} E. van Douwen and T. Przymusi\' nski, {\em Separable extensions of first countable spaces}, Fund.\ Math. 95 (1980), 147--158.

\bibitem{Engelkin} R. Engelking, {\em General Topology}, Heldermann Verlag, Berlin, 1989.

\bibitem{Fabian} M. Fabian, P. Habala, P. H\'ajek, V. Montesinos and V. Zizler, {\em Functional Analysis and Infinite-Dimensional Geometry}, Springer-Verlag, New York, 2001.

\bibitem{Frank} R. Frankiewicz, {\em Some remarks on embeddings of Boolean algebras and topological spaces II}, Fund.\ Math. 126 (1985), no.~1, 63--68.

\bibitem{Koppelberg} S. Koppelberg, {\em Handbook of Boolean algebras}, Vol. 1, R. Bonnet and J. D. Monk (eds.), North-Holland, Amsterdam, 1989.

\bibitem{Kunen} K. Kunen, {\em Inaccessibility properties of cardinals}, Doctoral Dissertation, Stanford, 1968.

\bibitem{Plebanek} W. Marciszewski and G. Plebanek, {\em Extension operators and twisted sums of $c_0$ and $C(K)$ spaces}, J.\ Funct.\ Anal. 274 (2018), 1491--1529.

\bibitem{Parovicenko} I. Parovi\v{c}enko, {\em A universal bicompact of weight $\aleph_1$}, Soviet Mathematics Doklady 4 (1963), 592--592, Russian original: Ob odnom universal'nom bikompakte vesa $\aleph$, Doklady Akademii Nauk SSSR 150 (1963) 36--39.

\bibitem{Sap} A. Sapounakis, {\em Measures on totally ordered spaces}, Mathematika 27 (1980), 225--235.

\bibitem{Sobczyk} A. Sobczyk, {\em Projections of the space $(m)$ on its subspace $(c_0)$}, Bull.\ Amer.\ Math.\ Soc. 47 (1941), 938--947.

\bibitem{Yost} D. Yost, {\em Asplund spaces for beginners}, Acta Univ. Carolin. Math. Phys. 34 (1993), no.~2, 159--177.

\end{thebibliography}
\end{document}